\documentclass[12pt,twoside]{article}
\usepackage{amsmath,amssymb,amsthm}

\setcounter{page}{1}

\setlength{\textheight}{21.6cm}

\setlength{\textwidth}{14cm}

\setlength{\oddsidemargin}{1cm}

\setlength{\evensidemargin}{1cm}

\pagestyle{myheadings}

\thispagestyle{empty}

\markboth{\small{S. Jitman and P. Udomkavanich}}{\small{The Gray
Image of   Codes over Finite Chain Rings}}

\date{}

\begin{document}


\centerline{}

\centerline{}

\centerline {\Large{\bf The Gray Image of   Codes over}}

\centerline{}

\centerline{\Large{\bf  Finite Chain Rings}}

\centerline{}

\centerline{\bf {Somphong Jitman$^a$ and Patanee
Udomkavanich$^b$}}

\centerline{}

\centerline{Department of Mathematics, Faculty of Science,}

\centerline{Chulalongkorn University, }

\centerline{Bangkok, 10330, Thailand}

\centerline{$^a$somphong.c@student.chula.ac.th,
$^b$pattanee.u@chula.ac.th}

\centerline{}

%
%
%
%
%

\newtheorem{Theorem}{\quad Theorem}[section]

\newtheorem{Definition}[Theorem]{\quad Definition}

\newtheorem{Corollary}[Theorem]{\quad Corollary}

\newtheorem{Proposition}[Theorem]{\quad Proposition}

\newtheorem{Lemma}[Theorem]{\quad Lemma}

\newtheorem{Example}[Theorem]{\quad Example}

\begin{abstract}
  The results of J. F. Qian  et al. \cite{QiMa2008} on
$(1-\gamma)$-constacyclic codes over finite chain rings of
nilpotency index $2$ are extended to $(1-\gamma^e)$-constacyclic
codes over finite chain rings of arbitrary nilpotency index $e+1$.
The Gray  map is introduced for this type of rings. We prove that
the Gray image of a linear $(1 - \gamma^{e })$-constacyclic code
over a finite chain ring is a distance-invariant quasi-cyclic code
over its residue field. When the length of    codes    and the
characteristic of a ring  are relatively prime,  the Gray images
of  a linear cyclic code and a linear $(1+\gamma^e)$-constacyclic
code are permutatively to quasi-cyclic codes over its residue
field.
\end{abstract}

{\bf Mathematics Subject Classification:} 94B15, 94B60, 94B99  \\

{\bf Keywords:} finite chain ring,  cyclic code, constacyclic
code, Gray image

\section{Introduction}

Recently,  J. F.  Qian et al. \cite{QiMa2008} introduced a
$(1-\gamma )$-constacyclic code    over a finite chain ring~$R$ of
nilpotency index $2$. They defined the Gray map from $R^n$ to
$\mathbb{F}_{p^k}^{p^kn}$ and proved that the Gray image of a
linear $(1-\gamma)$-constacyclic code over $R$ is a distance
invariant quasi-cyclic code over $\mathbb{F}_{p^k}$, the residue
field of $R$. In particular, the Gray image   of a cyclic code  of
length~$n$ over $R$ is permutation-equivalent to a quasi-cyclic
code over $\mathbb{F}_{p^k}$ when $\gcd(n,p)=1$.

Motivated by their work, we generalize  their results to the case
of $(1-\gamma^e)$-constacyclic  codes,  cyclic codes  and
$(1+\gamma^e)$-constacyclic  codes over finite chain rings of any
nilpotency index $e+1$, where $e\geq 2$.

 We introduce  a  $(1-\gamma^e)$-constacyclic  code  and  a  $(1+\gamma^e)$-constacyclic  code and  characterize  them in terms of
 corresponding polynomial representation in Section 2.  In
section 3, the Gray   map  on a finite chain ring~$R$  is defined
and the Gray images of a $(1-\gamma^e) $-constacyclic code is
investigated. The special case when the length of codes and the
characteristic of the ring are relatively prime is treated in
section 4.

\section{Preliminaries}\label{sec2}

A finite commutative ring   with identity $1\neq 0$ is called a
\textit{finite chain ring} if its ideals are linearly ordered by
inclusion. It is easily seen  that a   finite chain ring has a
unique maximal ideal. Both the characteristic and the cardinality
of a finite chain ring were shown, in \cite{Di2004}, to be powers
of the characteristic of its residue field. The \textit{nilpotency
index} of a finite chain ring is defined to be the smallest
positive integer $s$ such that $\gamma^s=0$ where $\gamma$ is a
generator of its maximal ideal. This $\gamma$ plays a role of
basis of the ring in the following sense:

\begin{Lemma} [\cite{NoSa2000}]\label{2.0}
  Let $R$ be a finite chain ring of nilpotency index $e+1$, $\gamma$ a generator of its maximal ideal and  $\{0\}\subseteq V \subseteq R$   a set of representatives for
the equivalence classes of $R$ under congruence modulo $\gamma$.
Assume that the residue field $R/\langle \gamma\rangle$ is
$\mathbb{F}_{p^k}$ where $p$ is a prime. Then
   \begin{enumerate}
\item for each $r\in R$ there are unique  $a_0(r), a_1(r) ,\dots
a_{e}(r)\in
 V$ such that $$r=a_0(r)+a_1(r)\gamma+\dots+a_{e}(r)\gamma^{e}, $$
\item  $|V| = p^k$, \item $|\langle\gamma^j \rangle |=
p^{k(e+1-j)}$ for $0\leq j\leq e$.
   \end{enumerate}
\end{Lemma}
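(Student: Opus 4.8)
The plan is to lean on two structural facts that come essentially for free. First, $R$ is a local ring whose unique maximal ideal is $\langle\gamma\rangle$, so the units of $R$ are exactly the elements of $R\setminus\langle\gamma\rangle$; and second, the nilpotency index being $e+1$ forces $\gamma^{e+1}=0$ while $\gamma^{j}\neq 0$ for every $0\le j\le e$. Part (2) is then immediate: congruence modulo $\gamma$ is the relation whose classes are the cosets of $\langle\gamma\rangle$, so $V$, containing exactly one representative per class, satisfies $|V|=|R/\langle\gamma\rangle|=|\mathbb{F}_{p^k}|=p^k$.

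For the existence half of (1) I would argue by a greedy descent. Given $r\in R$, choose the unique $a_0\in V$ congruent to $r$ modulo $\gamma$; then $r-a_0\in\langle\gamma\rangle$, say $r-a_0=\gamma r_1$. Applying the same step to $r_1$ produces $a_1\in V$ with $r_1-a_1\in\langle\gamma\rangle$, hence $r-a_0-a_1\gamma\in\langle\gamma^2\rangle$. Iterating $e+1$ times yields $r-\sum_{i=0}^{e}a_i\gamma^i\in\langle\gamma^{e+1}\rangle=\{0\}$, the desired expansion. For uniqueness I would compare two expansions and inspect the smallest index $j$ at which the coefficients differ: factoring out $\gamma^j$ gives $\gamma^j\big((a_j-b_j)+\gamma(\cdots)\big)=0$, where $a_j-b_j\notin\langle\gamma\rangle$ (distinct representatives are incongruent mod $\gamma$) is a unit, so the bracketed factor is a unit and one concludes $\gamma^j=0$, contradicting $j\le e$.

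Part (3) I would obtain by filtering $R$ through the chain $R=\langle\gamma^0\rangle\supseteq\langle\gamma\rangle\supseteq\cdots\supseteq\langle\gamma^{e+1}\rangle=\{0\}$ and computing each successive quotient. The key step is to show that multiplication by $\gamma^j$ induces an isomorphism of additive groups $R/\langle\gamma\rangle\xrightarrow{\sim}\langle\gamma^j\rangle/\langle\gamma^{j+1}\rangle$. The map $\bar r\mapsto\overline{\gamma^j r}$ is onto $\langle\gamma^j\rangle/\langle\gamma^{j+1}\rangle$ and well defined since it kills $\langle\gamma\rangle$; for injectivity I would take $r\notin\langle\gamma\rangle$, note $r$ is a unit, and observe that $\gamma^j r\in\langle\gamma^{j+1}\rangle$ would give $\gamma^j(r-\gamma s)=0$ with $r-\gamma s$ a unit, forcing $\gamma^j=0$, again impossible. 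Hence each quotient has order $p^k$, and telescoping the filtration gives $|\langle\gamma^j\rangle|=\prod_{i=j}^{e}|\langle\gamma^i\rangle/\langle\gamma^{i+1}\rangle|=p^{k(e+1-j)}$.

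The step I expect to be the genuine obstacle — the only place requiring more than bookkeeping — is the injectivity in the quotient isomorphism of Part (3), which is the same unit argument underlying uniqueness in Part (1). Everything there rests on the interplay between ``non-units $=\langle\gamma\rangle$'' and ``$\gamma^j\neq 0$ for $j\le e$''; once that is established cleanly, both the cardinality count and the uniqueness of the expansion follow formally.
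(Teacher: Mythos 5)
Your proof is correct and complete. Note, however, that the paper itself offers no proof of this lemma: it is quoted from the cited reference of Norton and S\u{a}l\u{a}gean, so there is no in-paper argument to compare against. Your argument is the standard one and all the key points are handled properly: the identification of the units of $R$ with $R\setminus\langle\gamma\rangle$ (valid because a finite chain ring is local), the greedy $\gamma$-adic descent for existence terminating because $\gamma^{e+1}=0$, and the ``smallest differing index'' argument for uniqueness, which correctly uses that a representative difference $a_j-b_j\notin\langle\gamma\rangle$ is a unit and that a unit plus an element of the maximal ideal is again a unit, so that $\gamma^j u=0$ would force $\gamma^j=0$ against the minimality of the nilpotency index. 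You are also right that the injectivity of $\bar r\mapsto\overline{\gamma^j r}$ from $R/\langle\gamma\rangle$ onto $\langle\gamma^j\rangle/\langle\gamma^{j+1}\rangle$ is the same unit argument in disguise; once that isomorphism is in place the telescoping product $|\langle\gamma^j\rangle|=\prod_{i=j}^{e}p^k=p^{k(e+1-j)}$ is immediate. No gaps.
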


  As a consequence of Lemma~\ref{2.0}, an element $\mathbf{r}\in {R}^n$ can be
written  uniquely~as
 $$\mathbf{r}={a}_0(\mathbf{r})+  {a}_1(\mathbf{r})\gamma+\dots+ {a}_e(\mathbf{r})\gamma^e ,$$ where
$ {a}_i(\mathbf{r})=(r_{i,0},r_{i,1},\dots, r_{i,n-1})\in V^n$,
for each $0\leq i\leq e$. We denote $$\widetilde{
{a}_i(\mathbf{r})}=(\widetilde{r_{i,0}},\widetilde{r_{i,1}},\dots,\widetilde{
r_{i,n-1}})$$ where $~\widetilde{}~: R\rightarrow
\mathbb{F}_{p^k}^n$ is the canonical map.

 A \textit{code  of length $n$ over a ring} $R$ is a nonempty
subset of $R^n$. A code $C$ is said to be \textit{linear}  if $C$
is a submodule of the $R$-module $R^n$. A code $C$ over $R$ is
called a {\it cyclic code} if $(c_0,c_1,\dots,c_{n-1})\in C$
implies $(c_{n-1},c_0,\dots,c_{n-2})\in C$.

 Let $R$ be a finite chain ring of nilpotency index $e+1$. Define  \mbox{$\nu  : {R}^n\rightarrow
 {R}^n$}~by
 \begin{align*}
\nu((r_0,r_1,\dots,r_{n-1}))=((1-\gamma^e)r_{n-1}
,r_0,\dots,r_{n-2}).
\end{align*}
A code $C$   over   $R$  is called a $(1-\gamma^e)$-\textit{cyclic
code} if   $\nu( {C})= {C}$.  A $(1+\gamma^e)$-\textit{cyclic
code} over $R$ is defined in a similar fashion.

Let $\sigma^{\otimes
p^{ke-1}}:\mathbb{F}_{p^{k}}^{p^{ke}n}\rightarrow\mathbb{F}_{p^k}^{p^{ke}n}$
be defined by
\[ (a^{(0)}\mid
a^{(1)}\mid\dots\mid a^{({p^{ke-1}}-1)})\mapsto(
 {\sigma }(a^{(0)})\mid
 {\sigma }(a^{(1)})\mid\dots,
 {\sigma }(a^{({p^{ke-1}}-1)})),\] where $a^{(i)}\in
\mathbb{F}_{p^k}^{pn}$, $\mid$ is a vector concatenation and  $
{\sigma }:\mathbb{F}_{p^{k}}^{pn}\rightarrow\mathbb{F}_{p^k}^{pn}$
denotes the
 cyclic shift
$$\sigma((c_0,c_1,\dots,c_{pn-1}))=( c_{pn-1} , c_0 ,\dots, c_{pn-2} ).$$
A code $\widetilde{C}$ of length $p^{ke}n$ over $\mathbb{F}_{p^k}$
satisfying $\sigma^{\otimes
p^{ke-1}}(\widetilde{C})=\widetilde{C}$ is called a
\textit{quasi-cyclic code of  index $p^{ke-1}$}.

A linear cyclic code $C$ over a ring is characterized in terms of
corresponding polynomial representation, $$P(C) =\left\{
c_0+c_1X+\dots+c_{n-1}X^{n-1}\mid (c_0,c_1,\dots,c_{n-1})\in
C\right\}.$$
\begin{Proposition}\label{2.1}
A code $C$ of  length $n$ over a finite chain ring  $R$ is a
linear cyclic code if and only if $P(C)$ is an ideal in the
quotient ring $R[X]/\langle X^n - 1\rangle$.
\end{Proposition}
Analogously, the polynomial representations of
$(1-\gamma^e)$-constacyclic  code and $(1+\gamma^e)$-constacyclic
code are ideals of  rings $R[X]/\langle X^n - (1-\gamma^{e })
\rangle$ and \mbox{$R[X]/\langle X^n - (1+\gamma^{e }) \rangle$,}
respectively.
\begin{Proposition}\label{2.2}
A code $C$ of   length $n$ over a finite chain ring $R$ of
\mbox{nilpotency} index $e+1$ is a linear $(1-\gamma^{e })
$-constacyclic code if and only if $P(C)$ is an ideal in the
quotient ring $R[X]/\langle X^n - (1-\gamma^{e }) \rangle$.
\end{Proposition}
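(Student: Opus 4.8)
The plan is to exploit the correspondence between the shift $\nu$ and multiplication by $X$ in the quotient ring, mirroring the classical argument for cyclic codes that underlies Proposition~\ref{2.1}. Writing $R_n$ for the quotient ring $R[X]/\langle X^n - (1-\gamma^e)\rangle$, note first that since $X^n - (1-\gamma^e)$ is monic of degree $n$, every class in $R_n$ has a unique representative of degree less than $n$; hence $P\colon R^n \to R_n$, sending $\mathbf{c}=(c_0,\dots,c_{n-1})$ to $c(X)=c_0+c_1X+\dots+c_{n-1}X^{n-1}$, is an $R$-module isomorphism.

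The key computation I would record is the intertwining relation. Using $X^n \equiv 1-\gamma^e$ in $R_n$,
$$X\cdot c(X) \equiv (1-\gamma^e)c_{n-1} + c_0X + \dots + c_{n-2}X^{n-1} \pmod{X^n-(1-\gamma^e)},$$
and the right-hand side is exactly the polynomial representation of $\nu(\mathbf{c}) = ((1-\gamma^e)c_{n-1},c_0,\dots,c_{n-2})$. Thus $P(\nu(\mathbf{c})) = X\cdot P(\mathbf{c})$, so under the isomorphism $P$ the shift $\nu$ corresponds to multiplication by $X$.

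For the forward implication, suppose $C$ is a linear $(1-\gamma^e)$-constacyclic code. Linearity makes $P(C)$ an $R$-submodule of $R_n$, so it is closed under addition and under the scalar action of $R$. From $\nu(C)=C$ and the intertwining relation, $P(C)$ is closed under multiplication by $X$, hence by every power $X^j$, and therefore by every element of $R_n$, since $R_n$ is generated as a ring by $R$ together with the class of $X$; so $P(C)$ is an ideal. Conversely, if $P(C)$ is an ideal of $R_n$, then closure under addition and under the scalar action of $R\hookrightarrow R_n$ shows $C$ is an $R$-submodule, i.e.\ linear, while closure under multiplication by $X$ yields $\nu(C)\subseteq C$.

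The one step needing care is upgrading this inclusion to the equality $\nu(C)=C$ demanded by the definition. Here I would invoke the chain ring structure: since the nilpotency index is $e+1$ we have $\gamma^{e+1}=0$, so $\gamma^e$ is nilpotent and $1-\gamma^e$ is a unit in $R$; consequently $\nu$ is an $R$-linear bijection of $R^n$. As $C$ is finite and $\nu|_C$ is injective, $\nu(C)\subseteq C$ forces $\nu(C)=C$. This invertibility of $1-\gamma^e$ is the only real structural input, and it is exactly what the finite chain ring hypothesis supplies; the remainder of the argument is the formal translation between submodules closed under $X$ and ideals of $R_n$.
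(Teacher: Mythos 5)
Your proof is correct, and it is the standard argument that the paper implicitly relies on: the paper states Proposition~\ref{2.2} without proof, simply calling it the analogue of Proposition~\ref{2.1}, so your write-up supplies exactly the intertwining relation $P(\nu(\mathbf{c})) = X\cdot P(\mathbf{c})$ that makes the analogy precise. Your extra care in upgrading $\nu(C)\subseteq C$ to $\nu(C)=C$ via the invertibility of $1-\gamma^e$ (a unit since $\gamma^e$ is nilpotent) together with finiteness of $C$ is a genuine detail the paper glosses over, and it is handled correctly.
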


\begin{Proposition}\label{2.3}
A code $C$ of   length $n$ over a finite chain ring $R$ of
\mbox{nilpotency} index $e+1$ is a linear $(1+\gamma^{e })
$-constacyclic code if and only if $P(C)$ is an ideal in the
quotient ring $R[X]/\langle X^n - (1+\gamma^{e }) \rangle$.
\end{Proposition}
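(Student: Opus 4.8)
The plan is to mirror the standard polynomial characterization of constacyclic codes, now for the shift by $1+\gamma^e$. First I would introduce the $(1+\gamma^e)$-cyclic shift map
\[
\nu'((r_0,r_1,\dots,r_{n-1}))=((1+\gamma^e)r_{n-1},r_0,\dots,r_{n-2}),
\]
so that $C$ is $(1+\gamma^e)$-constacyclic precisely when $\nu'(C)=C$. The key observation, exactly as in the cyclic case of Proposition~\ref{2.1} and the $(1-\gamma^e)$ case of Proposition~\ref{2.2}, is that under the polynomial correspondence $(c_0,\dots,c_{n-1})\mapsto c_0+c_1X+\dots+c_{n-1}X^{n-1}$, applying $\nu'$ to a codeword corresponds to multiplying its polynomial by $X$ and then reducing modulo $X^n-(1+\gamma^e)$. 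Indeed $X\cdot(c_0+\dots+c_{n-1}X^{n-1})=c_0X+\dots+c_{n-2}X^{n-1}+c_{n-1}X^n$, and replacing $X^n$ by $1+\gamma^e$ yields $(1+\gamma^e)c_{n-1}+c_0X+\dots+c_{n-2}X^{n-1}$, which is exactly $P(\nu'(\mathbf c))$.

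For the forward direction I would assume $C$ is a linear $(1+\gamma^e)$-constacyclic code and show $P(C)$ is an ideal of $R[X]/\langle X^n-(1+\gamma^e)\rangle$. Linearity of $C$ as an $R$-submodule gives closure of $P(C)$ under addition and under scalar multiplication by elements of $R$, so $P(C)$ is an $R$-submodule. The computation above shows that $\nu'(C)=C$ translates to $X\cdot P(C)\subseteq P(C)$ in the quotient ring; iterating, $X^i\cdot P(C)\subseteq P(C)$ for all $i$. Since every element of $R[X]/\langle X^n-(1+\gamma^e)\rangle$ is an $R$-linear combination of the powers $1,X,\dots,X^{n-1}$, closure under multiplication by each $X^i$ together with the $R$-module structure yields closure under multiplication by arbitrary ring elements, so $P(C)$ is an ideal.

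Conversely, if $P(C)$ is an ideal of $R[X]/\langle X^n-(1+\gamma^e)\rangle$, then it is in particular closed under addition and under multiplication by elements of $R$, so $C$ is a linear code; and closure under multiplication by $X$ gives $X\cdot P(C)\subseteq P(C)$, i.e. $\nu'(C)\subseteq C$. To upgrade this to equality I would note that $\nu'$ is a bijection on $R^n$ (its inverse sends $(r_0,\dots,r_{n-1})$ to $(r_1,\dots,r_{n-1},(1+\gamma^e)^{-1}r_0)$), which forces $\nu'(C)=C$ on the finite set $C$. The one genuine point to verify here, and the main obstacle, is that $1+\gamma^e$ is a \emph{unit} in $R$: since $\gamma^e$ is nilpotent, $1+\gamma^e$ is invertible with inverse $1-\gamma^e+\gamma^{2e}-\cdots$, which terminates because $\gamma^{e+1}=0$. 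This invertibility is what makes $\nu'$ bijective and ensures the quotient ring $R[X]/\langle X^n-(1+\gamma^e)\rangle$ is well behaved, so I would isolate it as the crucial lemma underpinning the whole equivalence.
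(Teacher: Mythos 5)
Your proof is correct and follows exactly the standard argument the paper tacitly relies on: the paper states Proposition~\ref{2.3} without proof, merely as ``analogous'' to the cyclic case of Proposition~\ref{2.1}, and your identification of the $(1+\gamma^e)$-shift with multiplication by $X$ modulo $X^n-(1+\gamma^e)$ is precisely that analogy spelled out. Your added observation that $1+\gamma^e$ is a unit (inverse $1-\gamma^e$, since $\gamma^{2e}=0$), which makes $\nu'$ bijective and upgrades $\nu'(C)\subseteq C$ to equality, is a genuine point the paper glosses over and is handled correctly.
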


In this paper, we work over finite chain rings. Throughout, $R$
denotes a finite chain ring of nilpotency index $e+1$ where
$e\geq2$ and the maximal ideal of $R$ is generated by $\gamma$.
The residue field $R/\langle \gamma\rangle$ is regarded as
$\mathbb{F}_{p^k}$ where $p$ is a prime.

A homogeneous distance on $R^n$ is defined, in \cite{GrSc1999}, in
terms of the weight function $w_{hom}(\textbf{r})$ defined  as
follows:
\[w_{hom}(\textbf{r})=\sum_{i=0}^{n-1}w_{hom}(r_i) \]
  for all  $\textbf{ r}=(r_0,r_1,\dots,r_{n-1})\in  {R}^n$, where
\[w_{hom}(r)=\begin{cases}
  p^{k(e-1)}(p^k-1)&\text{ if } r\in  {R}\setminus  {R}\gamma^e,\\
  p^{ke}&\text{ if } r\in {R}\gamma^e \setminus \{0\},\\
  0&\text{ otherwise. }
\end{cases}\]
The \emph{homogeneous distance} $d_{hom}(\textbf{r},\textbf{s})$
between vectors \mbox{$ \textbf{r},\textbf{s}$ in $ {R}^n$} is
defined to be $w_{hom}(\textbf{r}-\textbf{s})$.

\section{Gray Images of $(1-\gamma^{e })$-constacyclic Codes}

The Gray maps, which are defined in each case, have been used as
tools to linked codes over rings and codes over finite fields. For
a finite chain ring $R$, we define the Gay map from $R^n$ to
$\mathbb{F}_{p^k}^{p^{ke}n}$ as the following:

In order to defined the Gray map from $R^n$ to
$\mathbb{F}_{p^k}^{p^{ke}n}$, recall that each element
$\mathbf{r}\in {R}^n$ can be written  uniquely as
 $$\mathbf{r}={a}_0(\mathbf{r})+  {a}_1(\mathbf{r})\gamma+\dots+ {a}_e(\mathbf{r})\gamma^e ,$$ where
$ {a}_i(\mathbf{r})=(r_{i,0},r_{i,1},\dots, r_{i,n-1})\in V^n$,
for every $0\leq i\leq e$.

Let $\alpha$ be a fixed primitive element of~$\mathbb{F}_{p^k}$.
An element $\alpha_\epsilon\in \mathbb{F}_{p^k}$
\mbox{corresponding} to $\epsilon\in\mathbb{Z}_{p^k}$ is  given by
\[\alpha_\epsilon:=\xi_0(\epsilon)+\xi_1(\epsilon)\alpha+\dots+\xi_{k-1}(\epsilon)\alpha^{k-1},\]
if   the $p$-adic representation of $\epsilon$ is
\[\epsilon=\xi_0(\epsilon)+\xi_1(\epsilon)p+\dots+\xi_{k-1}(\epsilon)p^{k-1},\]
 where $\xi_i(\epsilon)\in
\{0,1,\dots,p-1\}$.

Likewise, each $\omega\in\mathbb{Z}_{p^{ke}}$ is viewed uniquely
as the $p^k$-adic representation
\[\omega=\overline{\xi}_0(\omega)+\overline{\xi}_1(\omega)p^k+\dots+\overline{\xi}_{e-1}(\omega)p^{k(e-1)},\]
where $\overline{\xi}_i(\omega)\in\{0,1,\dots,p^k-1\}$, for every
$0\leq i\leq e-1$.

Define  the Gray map $\Phi: {R}^n\rightarrow
\mathbb{F}_{p^k}^{p^{ke}n }$ by
\begin{align*}
\Phi(\mathbf{r}
)=(\mathbf{b}_0,\mathbf{b}_1,\dots,\mathbf{b}_{p^{ke}-1})
\end{align*}
$\text{ for all } \mathbf{r}={a}_0(\mathbf{r})+
{a}_1(\mathbf{r})\gamma+\dots+ {a}_e(\mathbf{r})\gamma^e\in
{R}^n,$ where
\begin{align}\label{2}
\mathbf{b}_{\omega p^k+\epsilon}=\alpha_\epsilon
\widetilde{{a}_0(\mathbf{r})}+\displaystyle\sum_{l=1}^{e-1}\alpha_{\overline{\xi}_{l-1}(\omega)}
\widetilde{{a}_l(\mathbf{r})} + \widetilde{{a}_e(\mathbf{r}}),
\end{align}
for all $0\leq \omega\leq p^{k(e-1)}-1$ and $0\leq \epsilon \leq
p^k-1$.

 Notice that the Gray map $\Phi$   defined above is
clearly injective. The distance-preserving property is shown in
the next proposition.

\begin{Proposition}\label{inva}
  The Gray map $\Phi$ defined above is an isometry from
  $( {R}^n,d_{hom})$ to
  $(\mathbb{F}_{p^k}^{p^{ke}n},d_{H})$, where $d_H$ denotes the
  Hamming distance on $\mathbb{F}_{p^k}^{p^{ke}n}$.
\end{Proposition}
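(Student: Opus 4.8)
The plan is to reduce the claim first to a single coordinate and then to a direct counting argument, the point being that $\Phi$ is genuinely nonlinear: the assignment $\epsilon\mapsto\alpha_\epsilon$ is a bijection from the cyclic group $\mathbb{Z}_{p^k}$ onto the elementary abelian group $\mathbb{F}_{p^k}$, so $\Phi$ is not additive and $d_H(\Phi(\mathbf{r}),\Phi(\mathbf{s}))$ is not in general the number of nonzero entries of a single vector $\Phi(\mathbf{r}-\mathbf{s})$. First I would note that, by \eqref{2}, the entry of $\Phi(\mathbf{r})$ sitting in position $m$ of block $\omega p^k+\epsilon$ depends only on the single coordinate $r_m\in R$ of $\mathbf{r}$, through the scalar version of \eqref{2} obtained by setting $n=1$. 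Writing $\phi\colon R\to\mathbb{F}_{p^k}^{p^{ke}}$ for that scalar Gray map and using that $d_H$ and $w_{hom}$ are both sums over coordinates, I obtain
\[ d_H(\Phi(\mathbf{r}),\Phi(\mathbf{s}))=\sum_{m=0}^{n-1}d_H(\phi(r_m),\phi(s_m)),\qquad w_{hom}(\mathbf{r}-\mathbf{s})=\sum_{m=0}^{n-1}w_{hom}(r_m-s_m), \]
so it suffices to establish the scalar identity $d_H(\phi(r),\phi(s))=w_{hom}(r-s)$ for all $r,s\in R$.

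Next I would compute $d_H(\phi(r),\phi(s))$ by counting the indices $(\omega,\epsilon)$ at which the two images disagree. Putting $A_l:=\widetilde{a_l(r)}-\widetilde{a_l(s)}\in\mathbb{F}_{p^k}$ for $0\le l\le e$, the $(\omega,\epsilon)$-entry of $\phi(r)-\phi(s)$ is
\[ \alpha_\epsilon A_0+\sum_{l=1}^{e-1}\alpha_{\overline{\xi}_{l-1}(\omega)}A_l+A_e. \]
The structural observation I would use is that, as $(\omega,\epsilon)$ ranges over the index set, the tuple $(\alpha_\epsilon,\alpha_{\overline{\xi}_0(\omega)},\dots,\alpha_{\overline{\xi}_{e-2}(\omega)})$ ranges bijectively over all of $\mathbb{F}_{p^k}^{e}$, since $\epsilon\mapsto\alpha_\epsilon$ is a bijection and $\omega$ is recovered from its $p^k$-adic digits $\overline{\xi}_0(\omega),\dots,\overline{\xi}_{e-2}(\omega)$. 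Hence the displayed entry is exactly the evaluation of the $\mathbb{F}_{p^k}$-affine functional $(x_0,\dots,x_{e-1})\mapsto A_0x_0+\dots+A_{e-1}x_{e-1}+A_e$ over all of $\mathbb{F}_{p^k}^{e}$, and a standard hyperplane-complement count yields the number of nonzero values: it is $0$ when $A_0=\dots=A_e=0$; it is $p^{ke}$ when $A_0=\dots=A_{e-1}=0$ but $A_e\ne0$; and it is $p^{k(e-1)}(p^k-1)$ as soon as some $A_l$ with $0\le l\le e-1$ is nonzero.

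It then remains to match these three cases with $w_{hom}(r-s)$, which is the heart of the argument. Because $0\in V$ and the canonical map restricts to a bijection $V\to\mathbb{F}_{p^k}$, we have $A_l=0$ if and only if $a_l(r)=a_l(s)$. I would then show that the least index $j$ with $a_j(r)\ne a_j(s)$ coincides with the $\gamma$-adic valuation of $r-s$: writing $r-s=\gamma^j\bigl(\sum_{l\ge j}(a_l(r)-a_l(s))\gamma^{\,l-j}\bigr)$, the leading term $a_j(r)-a_j(s)$ is a difference of distinct elements of $V$, hence lies outside $\langle\gamma\rangle$ and is a unit (in a chain ring the non-units are exactly $\langle\gamma\rangle$), which forces the parenthesised factor to be a unit and gives valuation exactly $j$. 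Consequently $A_0=\dots=A_e=0$ iff $r=s$; $\,A_0=\dots=A_{e-1}=0$ with $A_e\ne0$ iff $r-s\in R\gamma^e\setminus\{0\}$; and some $A_l$ with $l\le e-1$ is nonzero iff $r-s\in R\setminus R\gamma^e$. Comparing with the definition of $w_{hom}$ in Section~\ref{sec2}, the three counts above are respectively $0$, $p^{ke}$ and $p^{k(e-1)}(p^k-1)$, matching $w_{hom}(r-s)$ case by case and proving the scalar identity, hence the proposition. The main obstacle is exactly this matching: since $\Phi$ is not additive one cannot pass through $\phi(r-s)$, and the differences $A_l=\widetilde{a_l(r)}-\widetilde{a_l(s)}$ are not the digits of $r-s$; the argument succeeds because only the vanishing pattern of the $A_l$ enters the count, and that pattern is controlled by the valuation of $r-s$.
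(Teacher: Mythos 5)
Your proof is correct and follows essentially the same route as the paper's: reduce to a single coordinate and count the index pairs $(\omega,\epsilon)$ --- which biject with $\mathbb{F}_{p^k}^{e}$ --- at which an affine functional with coefficients $A_l=\widetilde{a_l(r)}-\widetilde{a_l(s)}$ vanishes, the count being governed by the least index $l$ with $A_l\neq 0$. The only cosmetic differences are that the paper disposes of the case $r-s\in R\gamma^e\setminus\{0\}$ via the partial additivity $\Phi(r+s\gamma^e)=\Phi(r)+\Phi(s\gamma^e)$ rather than by the same uniform count, and writes $r=s+t\gamma^m$ with $t$ a unit in place of your explicit valuation argument linking the vanishing pattern of the $A_l$ to the $\gamma$-adic valuation of $r-s$ (a point you justify more carefully than the paper does).
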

\begin{proof}
It suffices to show that $w_{hom}(r- {s})=w_H(\Phi( {r })-\Phi( {s
}))$ for all  $ {r }\neq {s}\in {R} $, where $w_H$   the Hamming
weight. We observe that
  \begin{align}
    \Phi( {r}\gamma^e)&=( \widetilde{{a}_0( {r})}, \widetilde{{a}_0( {r})},\dots,  \widetilde{{a}_0( {r})})\label{phi(r)}\\
    \Phi( {r}+ {s}\gamma^e)&=\Phi( {r})+\Phi( {s}\gamma^e).\label{phi(r+s)}
  \end{align}
First, consider the case $ {r }-{s}\in {R}\gamma^e \setminus
\{0\}$.  Then  ${r }-{s}=t\gamma^e$ for some $t\in R$. It follows
from (\ref{phi(r+s)})  that   $ \Phi(r) -\Phi(s) =
\Phi(t\gamma^e)$. Hence, by (\ref{phi(r)}), $w_H(\Phi(r)
-\Phi(s))=w_H(\Phi(t\gamma^e))=p^{ke}=w_{hom}(r-s).$

Next, assume that $ {r }-{s}\in {R}\setminus {R} \gamma^e$. Write
${r}=s+t\gamma^m$, where $0\leq m\leq e-1$   and $ t\in R\setminus
{R}\gamma $. In order to find $w_H(\Phi(r)-\Phi(s))$,  we need to
count the number of $0\leq \omega\leq p^{k(e-1)}-1$ and $0\leq
\epsilon \leq p^k-1$ such that
\begin{align}
  0&=\alpha_\epsilon \left(\widetilde{ {a}_0( r)}-\widetilde{{a}_0(s )}\right)
  +\displaystyle\sum_{l=1}^{e-1}\alpha_{\overline{\xi}_{l-1}(\omega)}
  \left(\widetilde{{a}_l(  r)}-\widetilde{{a}_l(s )}\right) + \left(\widetilde{{a}_e(  r)}-\widetilde{{a}_e(s
  )}\right).\label{lin.eq}
\end{align}

 Since  $ {{a}_i( { t\gamma^m})}=  0$ for
all $0\leq i\leq m-1$, we have ${{a}_i( { r})}=  {{a}_i( { s})}$
for all $0\leq i\leq m-1$ and  the representative of $ {{a}_m( {
r})}- {{a}_m( { s})} $ modulo $ \gamma$ is ${{a}_m( {
t\gamma^m})}.$ It follows from ${{a}_m( { t\gamma^m})}\neq 0$ that
$\widetilde{{a}_m( r)}-\widetilde{{a}_m(s )}\neq
 0$.
Consequently, equation~(\ref{lin.eq}) is a linear
 equation in the $e$ variables $\alpha_\epsilon$ and
 $\alpha_{\overline{\xi}_{l}(\omega)}$ $(0\leq l\leq e-2)$. Hence the number
 of distinct solutions $(\omega,\epsilon)$  is $p^{k(e-1)} $.
Therefore, $w_H(\Phi(r)-\Phi(s))=
p^{ke}-p^{k(e-1)}=p^{k(e-1)}(p^k-1)=w_{hom}(r-s).$
\end{proof}

Next theorem is a main key to characterize
$(1-\gamma^e)$-constacyclic \mbox{codes over~$R$.}
\begin{Theorem}\label{PhioNu}
$ \Phi\circ \nu=\sigma^{\otimes p^{ke-1}}\circ\Phi. $
\end{Theorem}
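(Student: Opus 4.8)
The plan is to verify the operator identity blockwise: both $\Phi\circ\nu$ and $\sigma^{\otimes p^{ke-1}}\circ\Phi$ send an arbitrary $\mathbf{r}\in R^n$ to a vector in $\mathbb{F}_{p^k}^{p^{ke}n}$ built from $p^{ke}$ length-$n$ blocks, so it suffices to match these blocks. Throughout, let $\tau$ denote the cyclic shift on $\mathbb{F}_{p^k}^n$. First I would record how $\nu$ acts on the $\gamma$-adic layers. Since the cyclic part of $\nu$ merely shifts the coordinates $r_0,\dots,r_{n-2}$ and the factor $(1-\gamma^e)$ affects only the top layer, I expect $\widetilde{a_i(\nu(\mathbf{r}))}=\tau(\widetilde{a_i(\mathbf{r})})$ for $0\le i\le e-1$. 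The one nontrivial contribution is the entry $(1-\gamma^e)r_{n-1}$: writing $r_{n-1}=\sum_{i=0}^e r_{i,n-1}\gamma^i$ and using $\gamma^{e+1}=0$, the product $\gamma^e r_{n-1}$ collapses to $r_{0,n-1}\gamma^e$, so the top layer satisfies $\widetilde{a_e(\nu(\mathbf{r}))}=\tau(\widetilde{a_e(\mathbf{r})})-(\widetilde{r_{0,n-1}},0,\dots,0)$.

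Substituting these into the defining formula~(\ref{2}) and using the linearity of $\tau$, I expect to obtain, for every index $\omega p^k+\epsilon$,
\[
\bigl(\Phi(\nu(\mathbf{r}))\bigr)_{\omega p^k+\epsilon}=\tau\!\left(\mathbf{b}_{\omega p^k+\epsilon}\right)-(\widetilde{r_{0,n-1}},0,\dots,0).
\]
Next I would unwind the right-hand side. The operator $\sigma^{\otimes p^{ke-1}}$ partitions $\Phi(\mathbf{r})$ into $p^{ke-1}$ groups of length $pn$, i.e. groups of $p$ consecutive blocks $\mathbf{b}_{ip},\dots,\mathbf{b}_{ip+p-1}$, and cyclically shifts each group by one coordinate. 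A direct inspection shows this replaces $\mathbf{b}_j$ by $\tau(\mathbf{b}_j)$ \emph{except} that its leading entry becomes the last entry of its predecessor $\mathbf{b}_{j'}$ inside the group, where $j'=j-1$ if $p\nmid j$ and $j'=j+p-1$ if $p\mid j$. Comparing the two descriptions, the whole theorem reduces to the single scalar identity $\mathbf{b}_{j',n-1}=\mathbf{b}_{j,n-1}-\widetilde{r_{0,n-1}}$.

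To close this, I would evaluate the last coordinate $\mathbf{b}_{j,n-1}=\alpha_\epsilon\widetilde{r_{0,n-1}}+D_\omega$, where $D_\omega:=\sum_{l=1}^{e-1}\alpha_{\overline{\xi}_{l-1}(\omega)}\widetilde{r_{l,n-1}}+\widetilde{r_{e,n-1}}$ depends only on $\omega$. Writing $j=\omega p^k+\epsilon$ one has $j\equiv\xi_0(\epsilon)\pmod p$, so $p\mid j$ exactly when $\xi_0(\epsilon)=0$; this is what reconciles the two factorizations of the index. When $\xi_0(\epsilon)\ge 1$, the predecessor keeps $\omega$ fixed and lowers $\epsilon$ by one, and the $p$-adic digits give $\alpha_{\epsilon-1}=\alpha_\epsilon-1$, which yields the identity. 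When $\xi_0(\epsilon)=0$, the predecessor index contributes $\epsilon+p-1$, whose $p$-adic expansion has leading digit $p-1$ and all other digits unchanged; since $\mathbb{F}_{p^k}$ has characteristic $p$ we get $\alpha_{\epsilon+p-1}=\alpha_\epsilon+(p-1)=\alpha_\epsilon-1$, again giving the identity.

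I expect the difficulty to be bookkeeping rather than depth: the essential work is reconciling the Gray-map splitting $j=\omega p^k+\epsilon$ with the $\sigma$-block splitting into groups of $p$, and handling the wrap-around case $\xi_0(\epsilon)=0$. It is precisely the vanishing of $p$ in the residue field that turns the digit $p-1$ into $-1$ and produces exactly the correction $-\widetilde{r_{0,n-1}}$ demanded by the left-hand side; everything else follows from linearity of $\tau$ and the definition of $\Phi$.
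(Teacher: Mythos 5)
Your proposal is correct and follows essentially the same route as the paper's proof: both verify the identity coordinatewise by matching the Gray-map index decomposition $j=\omega p^k+\epsilon$ against the $\sigma^{\otimes p^{ke-1}}$-grouping into blocks of $p$, handle the same case split ($j\neq 0$ versus the wrap-around with $\xi_0(\epsilon)\neq 0$ or $=0$), and close with the same characteristic-$p$ observation $\alpha_{\epsilon+p-1}=\alpha_\epsilon+(p-1)=\alpha_\epsilon-1$. Your reduction to the single scalar identity $\mathbf{b}_{j',n-1}=\mathbf{b}_{j,n-1}-\widetilde{r_{0,n-1}}$ is a cleaner packaging of the same computation, not a different argument.
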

\begin{proof}
It is more convenient, in this argument, to consider an element
$\mathbf{r}$ as $n$-tuple $\mathbf{r}=(r_0,r_1,\dots,r_{n-1})$,
where $r_i= r_{0,i}+ r_{1,i}\gamma+\dots+r_{e,i}\gamma^e\in {R}$
for $0\leq i\leq n-1$ and to view  $\Phi(\mathbf{r})$ as $
\Phi(\mathbf{r})=(b_0,b_1,\dots,b_{p^{ke}n-1}), $ where
\begin{align*}
b_{(\omega p^k+\epsilon)n+j}=\alpha_\epsilon
\widetilde{r_{0,j}}+\displaystyle\sum_{l=1}^{e-1}\alpha_{\overline{\xi}_{l-1}(\omega)}\widetilde{r_{l,j}}
+\widetilde{r_{e,j}},
\end{align*}
for all $0\leq \omega\leq p^{k(e-1)}-1$,  $0\leq \epsilon \leq
p^k-1$ and $0\leq j\leq n-1$.

Let $ (c_0,c_1,\dots,c_{p^{ke}n-1})=\sigma^{\otimes  {
p^{ke-1}}}\circ\Phi(\mathbf{r}) $. Then for each $0\leq \omega\leq
p^{k(e-1)}-1$, $0\leq \epsilon \leq p^k-1$ and $0\leq j\leq n-1$,
\begin{align*}
c_{(\omega p^k+\epsilon)n+j}   &=\begin{cases}
\alpha_\epsilon \widetilde{r_{0,j-1}}+\displaystyle\sum_{l=1}^{e-1}\alpha_{\overline{\xi}_{l-1}(\omega)}\widetilde{r_{l,j-1}} +\widetilde{r_{e,j-1}} ~~~    \text{ if }j\neq 0  ,\\
(\displaystyle\sum_{i=0}^{k-1}\xi_i(\epsilon)\alpha^i
-1)\widetilde{r_{0,n-1}}+\displaystyle\sum_{l=1}^{e-1}\alpha_{\overline{\xi}_{l-1}(\omega)}\widetilde{r_{l,n-1}}
\\~~~~~~~~~~~~~~~~~~~~~~~~~~~~~+\widetilde{r_{k,n-1}}~~~~~~~~\text{ if }j=0 \text{ and } \xi_0(\epsilon)\neq0 ,\\
(\displaystyle\sum_{i=0}^{k-1}\xi_i(\epsilon)\alpha^i+p-1)\widetilde{r_{0,n-1}}+\displaystyle\sum_{l=1}^{k-1}\alpha_{\overline{\xi}_{l-1}(\omega)}\widetilde{r_{l,n-1}}\\
~~~~~~~~~~~~~~~~~~~~~~~~~~~~~~~~~~+\widetilde{r_{e,n-1}}
 ~~~~\text{ if }j=0 \text{ and } \xi_0(\epsilon)=0.
\end{cases}
\end{align*}
On the other hand, observe that
\begin{eqnarray*}
  \nu(\mathbf{r})  &= &( r_{0,n-1}+ a_{1,n-1}\gamma+\dots+ (r_{e,n-1}-r_{0,n-1})\gamma^e,\\
                            &  & r_{0,0}+ a_{1,0}\gamma+\dots+ r_{e,0}\gamma^e, \dots ,r_{0,n-2}+ r_{1,n-2}\gamma+\dots+
                            r_{e,n-2}\gamma^e).
\end{eqnarray*}
Let  $
  (d_0,d_1,\dots,d_{p^{ke}n-1})= \Phi\circ\nu(\mathbf{r})
$. Then for each $0\leq \omega\leq p^{k(e-1)}-1$,  $0\leq \epsilon
\leq p^k-1$ and $0\leq j\leq n-1$,

\begin{align*}
d_{(\omega p^k+\epsilon)n+j}&=\begin{cases}
\alpha_\epsilon \widetilde{r_{0,j-1}}+\displaystyle\sum_{l=1}^{e-1}\alpha_{\overline{\xi}_{l-1}(\omega)} \widetilde{r_{l,j-1}} +\widetilde{r_{e,j-1} }   &\text{ if }j\neq 0  ,\\
(\alpha_\epsilon-1)\widetilde{r_{0,n-1}}+\displaystyle\sum_{l=1}^{e-1}\alpha_{\overline{\xi}_{l-1}(\omega)}\widetilde{r_{l,n-1}}+\widetilde{r_{e,n-1}}&\text{
if }j=0  ,
\end{cases}\\
&=\begin{cases}
\alpha_\epsilon \widetilde{r_{0,j-1}}+\displaystyle\sum_{l=1}^{e-1}\alpha_{\overline{\xi}_{l-1}(\omega)}\widetilde{r_{l,j-1}} +\widetilde{r_{e,j-1}}   ~~~   \text{ if }j\neq 0  ,\\
(\displaystyle\sum_{i=0}^{k-1}\xi_i(\epsilon)\alpha^i
-1)\widetilde{r_{0,n-1}}+\displaystyle\sum_{l=1}^{e-1}\alpha_{\overline{\xi}_{l-1}(\omega)}\widetilde{r_{l,n-1}}
\\~~~~~~~~~~~~~~~~~~~~~~~~~~~~~+\widetilde{r_{e,n-1}}
 ~~~~~~~~ \text{ if }j=0 \text{ and } \xi_0(\epsilon)\neq0 ,\\
(\displaystyle\sum_{i=0}^{k-1}\xi_i(\epsilon)\alpha^i+p-1)\widetilde{r_{0,n-1}}+\displaystyle\sum_{l=1}^{e-1}\alpha_{\overline{\xi}_{l-1}(\omega)}\widetilde{r_{l,n-1}}
\\~~~~~~~~~~~~~~~~~~~~~~~~~~~~~~~~~~+\widetilde{r_{e,n-1}} ~~~ \text{ if }j=0 \text{ and }
\xi_0(\epsilon)=0 ,
\end{cases}
 \end{align*}
Then the result follows.
\end{proof}

The following theorem gives a characterization of
$(1-\gamma^e)$-constacyclic codes over $R$ in terms of
quasi-cyclic code over $\mathbb{F}_{p^k}$, the residue field of
$R$.
\begin{Theorem}
 A code $C$ of length
  $n$ over $R$  is a $ (1-\gamma^e)$-constacyclic if and only if $\Phi(C)$
  is a    quasi-cyclic  code of index
  $p^{ke-1}$ and length $p^{ke}n$ over $\mathbb{F}_{p^k}$. In
  particular, if $C$ is a linear  $ (1-\gamma^e)$-constacyclic code, then $\Phi(C)$ is a   distance-invariant  quasi-cyclic  code of index
  $p^{ke-1}$ and length $p^{ke}n$.
\end{Theorem}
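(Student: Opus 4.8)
The plan is to reduce the theorem to the two earlier results: Theorem~\ref{PhioNu} (the intertwining relation $\Phi\circ\nu=\sigma^{\otimes p^{ke-1}}\circ\Phi$) and Proposition~\ref{inva} (the isometry property). The statement has three parts: the equivalence ``$C$ is $(1-\gamma^e)$-constacyclic $\iff$ $\Phi(C)$ is quasi-cyclic of index $p^{ke-1}$,'' and, under the extra hypothesis of linearity, that $\Phi(C)$ is additionally distance-invariant. My first step would be to record that $\Phi$ is a bijection onto its image, so that $\Phi$ conjugates $\nu$ into $\sigma^{\otimes p^{ke-1}}$ on $\Phi(R^n)$.

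For the forward direction, I would assume $\nu(C)=C$ and compute, using Theorem~\ref{PhioNu},
\begin{align*}
\sigma^{\otimes p^{ke-1}}(\Phi(C))=\Phi(\nu(C))=\Phi(C),
\end{align*}
which is exactly the defining condition for $\Phi(C)$ to be quasi-cyclic of index $p^{ke-1}$. For the converse, I would assume $\sigma^{\otimes p^{ke-1}}(\Phi(C))=\Phi(C)$ and apply the same intertwining identity to get $\Phi(\nu(C))=\Phi(C)$; since $\Phi$ is injective, this yields $\nu(C)=C$, so $C$ is $(1-\gamma^e)$-constacyclic. The careful point here is that $\sigma^{\otimes p^{ke-1}}$ is a permutation of the coordinates, hence maps $\Phi(R^n)$ bijectively to itself, so the equality of image sets can be pulled back through $\Phi^{-1}$; I would make sure to state that $\Phi$ restricted to $R^n$ is a bijection onto $\Phi(R^n)$ rather than merely injective into the ambient space.

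For the second part, I would add the hypothesis that $C$ is linear, so that by Proposition~\ref{2.2} the code $C$ is an $R$-submodule and $\Phi(C)$ is an $\mathbb{F}_{p^k}$-subspace (linearity of $\Phi$ over the additive structure follows from the formula~(\ref{2}), since each coordinate $\mathbf{b}_{\omega p^k+\epsilon}$ is additive in $\mathbf{r}$). Distance-invariance then follows directly from Proposition~\ref{inva}: for any codewords $\mathbf{x},\mathbf{y}\in C$ we have $d_H(\Phi(\mathbf{x}),\Phi(\mathbf{y}))=d_{hom}(\mathbf{x},\mathbf{y})=w_{hom}(\mathbf{x}-\mathbf{y})$, and since $C$ is linear the difference $\mathbf{x}-\mathbf{y}$ ranges over all of $C$; thus the Hamming weight distribution of $\Phi(C)$ coincides with the homogeneous weight distribution of $C$, which is the meaning of distance-invariance.

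I expect essentially no obstacle, since the genuine work was already done in the two cited results; the theorem is a clean corollary of combining them. The only subtlety worth flagging is the bookkeeping in the converse direction, where one must invoke injectivity of $\Phi$ to cancel it from both sides of $\Phi(\nu(C))=\Phi(C)$, and the observation that linearity of $\Phi$ (needed to identify $\Phi(C)$ as a genuine linear code over $\mathbb{F}_{p^k}$) is immediate from the defining formula but should be stated explicitly rather than assumed.
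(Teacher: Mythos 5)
Your argument is the same as the paper's: the equivalence follows by applying the intertwining relation $\Phi\circ\nu=\sigma^{\otimes p^{ke-1}}\circ\Phi$ of Theorem~\ref{PhioNu} in both directions together with injectivity of $\Phi$, and distance-invariance comes from Proposition~\ref{inva}. One side remark in your write-up is wrong, though it does not affect the stated theorem: $\Phi$ is \emph{not} additive, because the coefficient maps $\mathbf{r}\mapsto a_i(\mathbf{r})$ involve carries (the classical $\mathbb{Z}_4$ Gray map is the standard nonlinear example), so $\Phi(C)$ need not be an $\mathbb{F}_{p^k}$-subspace; fortunately the theorem only claims $\Phi(C)$ is a distance-invariant quasi-cyclic code, and your weight-distribution argument uses only linearity of $C$ itself, so the proof of the actual statement stands.
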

\begin{proof}
The necessary  part follows directly from Theorem~\ref{PhioNu}:
\[  \sigma ^{\otimes
p^{ke-1}}\circ\Phi(C)=\Phi\circ\nu(C) =\Phi(C) .\] For the
sufficient part, note that if $\Phi(C)$ is quasi-cyclic, then
\[ \Phi(C)= \sigma ^{\otimes p^{ke-1}}\circ\Phi(C)=
\Phi\circ\nu(C).\] The injectivity of $\Phi$ implies $\nu(C)=C$,
that is $C$ is $(1-\gamma^e)$-constacyclic.

The distance-invariant part for   a  linear code follows from
Proposition~\ref{inva}.
\end{proof}

\section{Gray Images of  Linear Cyclic  Codes and Linear $(1+\gamma^e)$-constacyclic Codes    }

Following \cite{LiBl2002}, J. F. Qian et al. proved in
\cite{QiMa2008} that the Gray image of a linear cyclic code
over~$R$ is quasi-cyclic under the Nechaev permutation if the code
length is relatively prime to the characteristic of $R$. Here, we
generalize this result over a finite chain ring $R$ of nilpotency
index greater than $2$. Moreover, we give a description of the
Gray image of a linear $(1+\gamma^e)$-constacyclic code over~$R$.

Throughout, assume that the characteristic of $R$ is
 relatively prime to $n$, the length of codes. Since the characteristic of $R$ is a power of $p$ for some prime $p$, $\gcd(n,p)=1$. Then there exists $n^\prime\in\{ 0, 1, \dots,
p-1\}$ such that $nn^\prime\equiv 1 (\text{mod } p)$. Let
$\beta=1+n^\prime \gamma^e$. Then $\beta^j=( 1+n^\prime
\gamma^e)^j=1+jn^\prime \gamma^e\in R$, for all $j\in\mathbb{Z}$.
In particular, $\beta^n=1+\gamma^e$ and $\beta^{-n}=1-\gamma^e$.

Let $\mu$ be the map defined on the polynomial ring $R[X]$ by
$p(X)\mapsto p(\beta X) $.  Then $\mu$ induces isomorphisms   from
$R[X]/\langle X^n-1\rangle$ to $ R[X]/\langle
  X^n-(1-\gamma^e)\rangle$ and from $R[X]/\langle
X^n-(1+\gamma^e)\rangle$   to $ R[X]/\langle
  X^n-1\rangle$.
Hence $R[X]/\langle X^n-(1+\gamma^e)\rangle$ is isomorphic to $
R[X]/\langle
  X^n-(1-\gamma^e)\rangle$
  via the map
$\mu^2$. These isomorphisms give one-to-one correspondences
between the ideals of these rings.

As \mbox{H. Q. Dinh} et al. showed, in
  \cite{Di2004},  that $R[X]/\langle X^n-1\rangle$ is a principal ideal ring, $R[X]/\langle
  X^n-(1-\gamma^e)\rangle$ and $R[X]/\langle
  X^n+(1-\gamma^e)\rangle$ are also   principal ideal rings.

  Let $ \overline{\mu}: R ^n \rightarrow R ^n $ be a module automorphism defined~by
   \[ (r_0,r_1,\dots,r_{n-1})\mapsto  (r_0,\beta r_1,\dots,\beta^{n-1}r_{n-1}
  ).\]
  Consequently, $\bar{\mu}^2=\bar{\mu}\circ\bar{\mu}$ is a  module automorphism on $R^n$.

Proposition~\ref{cycliF} follows from   Propositions
\ref{2.1}-\ref{2.2} and the definitions of $\mu$
and~$\overline{\mu}$.
\begin{Proposition}\label{cycliF} A code
$C\subseteq R^n$ is a linear cyclic code if and only if
$\overline{\mu}(C)$ is a linear $(1-\gamma^e)$-constacyclic code.
\end{Proposition}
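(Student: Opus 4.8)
The plan is to translate both sides of the equivalence into statements about ideals in quotient polynomial rings and then to exploit the isomorphism $\mu$ already constructed above. By Proposition~\ref{2.1}, $C$ is a linear cyclic code precisely when $P(C)$ is an ideal of $R[X]/\langle X^n-1\rangle$, and by Proposition~\ref{2.2}, $\overline{\mu}(C)$ is a linear $(1-\gamma^e)$-constacyclic code precisely when $P(\overline{\mu}(C))$ is an ideal of $R[X]/\langle X^n-(1-\gamma^e)\rangle$. The bridge between these two conditions is the compatibility of the module automorphism $\overline{\mu}$ on $R^n$ with the ring map $\mu$ on polynomial representatives.

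The key step is to establish the intertwining identity $P(\overline{\mu}(C))=\mu(P(C))$. I would verify it element by element: given $\mathbf{r}=(r_0,r_1,\dots,r_{n-1})\in C$ with representing polynomial $r(X)=r_0+r_1X+\dots+r_{n-1}X^{n-1}$, the definition of $\overline{\mu}$ gives $\overline{\mu}(\mathbf{r})=(r_0,\beta r_1,\dots,\beta^{n-1}r_{n-1})$, whose representing polynomial is
\[
r_0+\beta r_1X+\dots+\beta^{n-1}r_{n-1}X^{n-1}=r(\beta X)=\mu(r(X)).
\]
Since each $\beta^{i}r_i\in R$ and the exponent of $X$ is unchanged, $\mu(r(X))$ is again a polynomial of degree at most $n-1$; hence it is itself the canonical representative of degree less than $n$, and no reduction modulo the defining ideal is needed to read off the coordinates of $\overline{\mu}(\mathbf{r})$. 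Running over all $\mathbf{r}\in C$ yields the set equality $P(\overline{\mu}(C))=\mu(P(C))$.

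Finally I would invoke that $\mu$ is a ring isomorphism from $R[X]/\langle X^n-1\rangle$ onto $R[X]/\langle X^n-(1-\gamma^e)\rangle$, as recorded above. A ring isomorphism carries ideals bijectively onto ideals, so $P(C)$ is an ideal of $R[X]/\langle X^n-1\rangle$ if and only if $\mu(P(C))=P(\overline{\mu}(C))$ is an ideal of $R[X]/\langle X^n-(1-\gamma^e)\rangle$. Combined with Propositions~\ref{2.1} and~\ref{2.2}, this is exactly the asserted equivalence. The only delicate point is the bookkeeping in the previous paragraph: one must check that $\mu$ preserves the property of being a representative of degree less than $n$, so that the coordinatewise action of $\overline{\mu}$ matches the polynomial action of $\mu$ without an intervening reduction step. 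This is where I would concentrate the verification; the remainder is a formal consequence of the isomorphism.
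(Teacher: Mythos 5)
Your proof is correct and takes exactly the route the paper intends: the paper's entire ``proof'' of Proposition~\ref{cycliF} is the one-line remark that it follows from Propositions~\ref{2.1}--\ref{2.2} and the definitions of $\mu$ and $\overline{\mu}$. Your write-up fills in precisely those details, notably the key identity $P(\overline{\mu}(C))=\mu(P(C))$ (valid without any reduction modulo the defining ideal, since degrees stay below $n$) and the transfer of ideals under the ring isomorphism induced by $\mu$.
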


Similarly, Proposition~\ref{+cycliF} follows directly from
Propositions \ref{2.1}, \ref{2.3} and the {definitions} of $\mu^2$
and~$\overline{\mu}^2$.

\begin{Proposition}\label{+cycliF}
A code $C\subseteq R^n$ is a linear $(1+\gamma^e)$-constacyclic
code if and only if $\overline{\mu}^2(C)$ is a linear
$(1+\gamma^e)$-constacyclic code.
\end{Proposition}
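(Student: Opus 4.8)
The plan is to prove the equivalence by transporting the ideal-theoretic characterization of Proposition~\ref{2.3} across the map induced by $\mu^2$, in exact parallel with the proof of Proposition~\ref{cycliF}. First I would record the compatibility between the module map $\overline{\mu}$ on $R^n$ and the substitution $\mu\colon p(X)\mapsto p(\beta X)$ on $R[X]$. Since $\overline{\mu}(r_0,\dots,r_{n-1})=(r_0,\beta r_1,\dots,\beta^{n-1}r_{n-1})$, the coefficient of $X^i$ in $P(\overline{\mu}(C))$ is $\beta^i$ times that of $P(C)$, so $P(\overline{\mu}(C))=\mu(P(C))$; iterating gives $P(\overline{\mu}^2(C))=\mu^2(P(C))$. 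Thus the code $\overline{\mu}^2(C)$ and the ideal-candidate $\mu^2(P(C))$ carry the same data, and the whole statement reduces to an assertion about ideals in $R[X]/\langle X^n-(1+\gamma^e)\rangle$.

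Next I would exploit that $\mu^2$ is a ring isomorphism of the relevant quotient rings, as recorded in the paragraph preceding the proposition, so that it carries ideals bijectively to ideals and the correspondence is reversible through $(\mu^2)^{-1}$. By Proposition~\ref{2.3}, $C$ is a linear $(1+\gamma^e)$-constacyclic code exactly when $P(C)$ is an ideal of $R[X]/\langle X^n-(1+\gamma^e)\rangle$; applying $\mu^2$ then shows $P(\overline{\mu}^2(C))=\mu^2(P(C))$ is an ideal of the image ring, and reading Proposition~\ref{2.3} in that image ring identifies $\overline{\mu}^2(C)$ as the corresponding constacyclic code. Because $\mu^2=\mu\circ\mu$ factors through $R[X]/\langle X^n-1\rangle$, Proposition~\ref{2.1} governs the intermediate step and makes the bijection on ideals transparent. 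For the converse I would run the same transport backwards through $(\mu^2)^{-1}$, using that $\overline{\mu}^2$ is a module automorphism of $R^n$ (hence injective) to recover $C$ from $\overline{\mu}^2(C)$.

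The main obstacle is the bookkeeping of the \emph{target} ring under $\mu^2$: the only step needing explicit computation is to verify that $\mu^2$ stabilizes the defining ideal, i.e.\ that $\mu^2\bigl(\langle X^n-(1+\gamma^e)\rangle\bigr)$ is again $\langle X^n-(1+\gamma^e)\rangle$, so that Proposition~\ref{2.3} applies verbatim on both sides of the equivalence rather than landing in a different constacyclic ring. Concretely this amounts to computing $\mu^2(X^n-(1+\gamma^e))=\beta^{2n}X^n-(1+\gamma^e)$ and factoring out the unit $\beta^{2n}=(1+\gamma^e)^2$, using $\beta^n=1+\gamma^e$ and $\gamma^{2e}=0$ from the preamble, and checking that the resulting generator generates the same ideal up to a unit. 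This single ideal-image identity is the crux; once it is in hand the remainder is the formal transport of ideals, identical in structure to the proof of Proposition~\ref{cycliF}.
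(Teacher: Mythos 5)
Your overall strategy---transporting ideals through $\mu^2$ via the identity $P(\overline{\mu}^2(C))=\mu^2(P(C))$ and the polynomial characterizations---is exactly the route the paper's one-line proof indicates, but your self-declared crux computation is wrong, and the error is decisive. From $\beta^n=1+\gamma^e$ and $\gamma^{2e}=0$ you correctly get $\beta^{2n}=(1+\gamma^e)^2=1+2\gamma^e$, but then
\[
\mu^2\bigl(X^n-(1+\gamma^e)\bigr)=\beta^{2n}X^n-(1+\gamma^e)=\beta^{2n}\bigl(X^n-\beta^{-2n}(1+\gamma^e)\bigr)=\beta^{2n}\bigl(X^n-(1-\gamma^e)\bigr),
\]
since $\beta^{-2n}=1-2\gamma^e$ and $(1-2\gamma^e)(1+\gamma^e)=1-\gamma^e$. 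So $\mu^2$ does \emph{not} stabilize $\langle X^n-(1+\gamma^e)\rangle$: it carries it onto $\langle X^n-(1-\gamma^e)\rangle$, which is precisely what the paragraph preceding the proposition asserts (``$R[X]/\langle X^n-(1+\gamma^e)\rangle$ is isomorphic to $R[X]/\langle X^n-(1-\gamma^e)\rangle$ via the map $\mu^2$''). For odd $p$ the constant $2\gamma^e$ is nonzero, and since $X^n-(1+\gamma^e)$ is monic of degree $n\geq 1$, no nonzero constant lies in the ideal it generates; hence the two ideals are genuinely distinct (only when $p=2$ does $1+\gamma^e=1-\gamma^e$). Your claim that ``the resulting generator generates the same ideal up to a unit'' is therefore false, and with it the literal biconditional you set out to prove: if $C$ is linear $(1+\gamma^e)$-constacyclic, then $P(\overline{\mu}^2(C))=\mu^2(P(C))$ is an ideal of $R[X]/\langle X^n-(1-\gamma^e)\rangle$, so $\overline{\mu}^2(C)$ is $(1-\gamma^e)$-constacyclic, not in general $(1+\gamma^e)$-constacyclic. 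Concretely, over $R=\mathbb{Z}_{27}$ ($p=3$, $\gamma=3$, $e=2$) with $n=2$, $n'=2$, $\beta=19$: the linear code $C=\{(19c,c):c\in\mathbb{Z}_{27}\}$ is $(1+\gamma^2)=10$-constacyclic, but $\overline{\mu}^2(C)=\{(19c,10c)\}$ fails to be $10$-constacyclic (the shift of $(19,10)$ is $(19,19)$, which is not in the code) while it is $19=(1-\gamma^2)$-constacyclic.

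What happened is that the proposition as printed contains a typo, which you took at face value and then forced your computation to confirm: the conclusion should read ``$\overline{\mu}^2(C)$ is a linear $(1-\gamma^e)$-constacyclic code.'' That corrected statement is what the surrounding text demands---the preamble exhibits $\mu^2$ as an isomorphism onto $R[X]/\langle X^n-(1-\gamma^e)\rangle$, and the subsequent Corollary feeds $\overline{\mu}^2(C)$ into Theorem~\ref{PhioNu} via Proposition~\ref{+comPi}, and Theorem~\ref{PhioNu} concerns the $(1-\gamma^e)$-shift $\nu$. Indeed your own factorization through $R[X]/\langle X^n-1\rangle$ should have flagged the drift: $\mu(X^n-(1+\gamma^e))=(1+\gamma^e)(X^n-1)$ and $\mu(X^n-1)=(1+\gamma^e)(X^n-(1-\gamma^e))$, so the chain lands at $(1-\gamma^e)$ and never returns to $(1+\gamma^e)$. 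With the target corrected, the rest of your argument---$P(\overline{\mu}^2(C))=\mu^2(P(C))$, the bijection on ideals under the ring isomorphism, Propositions~\ref{2.1} and \ref{2.3} (with Proposition~\ref{2.2} for the target ring) on the two ends, and injectivity of the module automorphism $\overline{\mu}^2$ for the converse---goes through and coincides with the paper's intended proof.
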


In order to characterize the Gray image of a linear cyclic code
over $R$, the Nechaev permutation referred in \cite{QiMa2008} is
extended to be the   permutation  $\tau$   on $\{0,1,\dots,pn-1\}$
defined by
\[\tau(mn+j)=(m+jn^\prime)_pn+j,\]
where $0\leq m\leq p-1$, $0\leq j\leq n-1$, and $(m+jn^\prime)_p$
is the least residue of $m+jn^\prime$ modulo~$p$.  The permutation
$\tau$ is used to define $ {\pi
}:\mathbb{F}_{p^{k}}^{pn}\rightarrow\mathbb{F}_{p^k}^{pn}$ as
follows:
$$\pi((c_0,c_1,\dots,c_{pn-1}))=( c_{\tau(0)} , c_{\tau(1)} ,\dots, c_{\tau(pn-1)}
).$$ The map $\pi$ is then extended to $\pi^{\otimes
{p^{ke-1}}}:\mathbb{F}_{p^{k}}^{p^{ke}n}\rightarrow\mathbb{F}_{p^k}^{p^{ke}n}$
 by
\[ (a^{(p^0)}\mid\dots\mid a^{({p^{ke-1}})})\mapsto(\pi(a^{(p^0)})\mid
\dots\mid \pi(a^{({p^{ke-1}})})),\] where $a^{(i)}\in
\mathbb{F}_{p^k}^{pn}$, $\mid$ is a vector concatenation.

\begin{Proposition}\label{comPi}
   $\Phi\circ\overline{\mu}=\pi^{\otimes
{p^{ke-1}}}\circ\Phi.$
\end{Proposition}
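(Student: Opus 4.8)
The plan is to prove the identity coordinate-by-coordinate, in parallel with the computation used in the proof of Theorem~\ref{PhioNu}. As there, I would write $\mathbf{r}=(r_0,\dots,r_{n-1})$ with $r_j=r_{0,j}+r_{1,j}\gamma+\dots+r_{e,j}\gamma^e$, index $\Phi(\mathbf{r})=(b_0,\dots,b_{p^{ke}n-1})$ through formula~(\ref{2}), and check that the $(\omega p^k+\epsilon)n+j$-th coordinates of both sides agree for all admissible $\omega,\epsilon,j$.

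First I would compute the left-hand side $\Phi\circ\overline{\mu}(\mathbf{r})$. The $j$-th entry of $\overline{\mu}(\mathbf{r})$ is $\beta^j r_j=(1+jn^\prime\gamma^e)r_j$. Since $\gamma^{e+1}=0$, multiplication by $jn^\prime\gamma^e$ kills every term $r_{l,j}\gamma^l$ with $l\geq 1$ and affects only the top coefficient, so the $\gamma$-adic coefficients of $\beta^j r_j$ coincide with those of $r_j$ for $0\leq l\leq e-1$, while the $\gamma^e$-coefficient becomes $r_{e,j}+jn^\prime r_{0,j}$ up to its residue modulo $\gamma$. Applying the canonical map and formula~(\ref{2}), the coordinate of $\Phi(\overline{\mu}(\mathbf{r}))$ at position $(\omega p^k+\epsilon)n+j$ equals
$$(\alpha_\epsilon+jn^\prime)\widetilde{r_{0,j}}+\sum_{l=1}^{e-1}\alpha_{\overline{\xi}_{l-1}(\omega)}\widetilde{r_{l,j}}+\widetilde{r_{e,j}},$$
where $jn^\prime$ is read as an element of $\mathbb{F}_{p^k}$.

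Second I would compute the right-hand side $\pi^{\otimes p^{ke-1}}\circ\Phi(\mathbf{r})$. The index $(\omega p^k+\epsilon)n+j$ lies in the block numbered $b=\lfloor(\omega p^k+\epsilon)/p\rfloor$ at local position $mn+j$ with $m=\xi_0(\epsilon)$ (because $p\mid p^k$ forces $\omega p^k+\epsilon\equiv\xi_0(\epsilon)\pmod p$). Since $\pi$ reads coordinate $\tau(mn+j)=(m+jn^\prime)_p\,n+j$, the effect of $\pi^{\otimes p^{ke-1}}$ is to replace the index by $(\omega p^k+\epsilon^\prime)n+j$, where $\epsilon^\prime$ is obtained from $\epsilon$ by replacing its zeroth $p$-adic digit $\xi_0(\epsilon)$ with $(\xi_0(\epsilon)+jn^\prime)_p$, leaving $\omega$ and the higher digits untouched. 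The crux is the field identity $\alpha_{\epsilon^\prime}=\alpha_\epsilon+jn^\prime$: since $(\xi_0(\epsilon)+jn^\prime)_p\equiv\xi_0(\epsilon)+jn^\prime\pmod p$ and $p\cdot 1=0$ in $\mathbb{F}_{p^k}$, any carry disappears and the two differ by exactly $jn^\prime$. Substituting into formula~(\ref{2}) reproduces the same expression displayed above, and comparison of the two coordinate formulas gives the claim.

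The main obstacle is the index bookkeeping in the second step: correctly matching the block-and-offset description of $\pi^{\otimes p^{ke-1}}$ with the $(\omega,\epsilon,j)$-indexing, and verifying that the Nechaev permutation acts precisely as the digit substitution $\xi_0(\epsilon)\mapsto(\xi_0(\epsilon)+jn^\prime)_p$ with $\omega$ fixed. Once this is pinned down, the carry-cancellation identity $\alpha_{\epsilon^\prime}=\alpha_\epsilon+jn^\prime$ is what makes the two sides coincide, and it is exactly mirrored on the left by the fact that $\beta^j$ alters only the $\gamma^e$-coefficient, adding $jn^\prime r_{0,j}$.
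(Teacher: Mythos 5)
Your proposal is correct and follows essentially the same route as the paper's proof: a coordinate-by-coordinate comparison in the $(\omega,\epsilon,j)$-indexing, where $\overline{\mu}$ adds $jn^\prime r_{0,j}$ to the $\gamma^e$-coefficient (since $\gamma^{e+1}=0$) and $\pi^{\otimes p^{ke-1}}$ replaces the digit $\xi_0(\epsilon)$ by $(\xi_0(\epsilon)+jn^\prime)_p$, the two effects matching via the identity $\alpha_{\epsilon^\prime}=\alpha_\epsilon+jn^\prime$ in characteristic $p$. Your block-and-offset description of the action of $\pi^{\otimes p^{ke-1}}$ is just a more explicit rendering of the index manipulation the paper performs directly.
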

\begin{proof}
First, we conclude  that $
\Phi(\mathbf{r})=(b_0,b_1,\dots,b_{p^{ke}n-1}), $ where
\begin{align*}
b_{(\omega p^k+\epsilon)n+j}=\alpha_\epsilon
\widetilde{r_{0,j}}+\displaystyle\sum_{l=1}^{e-1}\alpha_{\overline{\xi}_{l-1}(\omega)}\widetilde{r_{l,j}}
+\widetilde{r_{e,j}},
\end{align*} for all
$0\leq \omega\leq p^{k(e-1)}-1$,  $0\leq \epsilon \leq p^k-1$ and
$0\leq j\leq n-1$. Let $ (c_0,c_1,\dots,c_{p^{ke}n-1})=
\pi^{\otimes p^{ke-1}}(\Phi(\mathbf{r})).$ Then
\begin{align*}
c_{(\omega p^k+\epsilon)n+j} &=c_{ (\omega p^k+(\xi_0(\epsilon) +\xi_1(\epsilon)p+\dots+\xi_{k-1}(\epsilon)p^{k-1}))n+j}\\
&=c_{ (\omega p^k+(\xi_1(\epsilon)p+\dots+\xi_{k-1}(\epsilon)p^{k-1}))n +\xi_0(\epsilon)n+j}\\
&=b_{ (\omega p^k+(\xi_1(\epsilon)p+\dots+\xi_{k-1}(\epsilon)p^{k-1}))n +(\xi_0(\epsilon)+jn^\prime)_pn+j}\\
&=b_{ (\omega
p^k+((\xi_0(\epsilon)+jn^\prime)_p+\xi_1(\epsilon)p+\dots+\xi_{k-1}(\epsilon)p^{k-1}))n
+j}.
   \end{align*}
On the other hand,   $ \overline{\mu}(\mathbf{r} )=(r_0,\beta
r_1,\dots,\beta^{n-1}r_{n-1})$.  Since  \mbox{ $\beta^j
=1+jn^\prime \gamma^e\in R$,}
\begin{align*}
\beta^j r_j&= (r_{0,j}+ r_{1,j}\gamma+\dots+(jn^\prime
r_{0,j}+r_{e,j})\gamma^e).
\end{align*}
Let $(d_0,d_1,\dots,d_{p^{ke}-1})
=\Phi(\overline{\mu}(\mathbf{r}))$. Then

\begin{align*}
d_{(\omega p^k+\epsilon)n+j} &=  \alpha_\epsilon
\widetilde{r_{0,j}}
+\displaystyle\sum_{l=1}^{e-1}\alpha_{\overline{\xi}_{l-1}(\omega)}
\widetilde{ r_{l,j}} + \widetilde{(jn^\prime   r_{0,j}+r_{e,j} )}\\
 &=  (\alpha_\epsilon +jn^\prime   )\widetilde{r_{0,j}}
+\displaystyle\sum_{l=1}^{e-1}\alpha_{\overline{\xi}_{l-1}(\omega)}
 \widetilde{r_{l,j} }+   \widetilde{ {r_{e,j}}}  \\
  &=  \left((\xi_0(\epsilon)+ \xi_1(\epsilon)\alpha+\dots+\xi_{k-1}(\epsilon)\alpha^{k-1})  +jn^\prime   \right)\widetilde{r_{0,j}}
\\&~~~+\displaystyle\sum_{l=1}^{e-1}\alpha_{\overline{\xi}_{l-1}(\omega)}
 \widetilde{r_{l,j}} +   \widetilde{r_{e,j}}  \\
&=\alpha_{(\xi_0(\epsilon)+jn^\prime)_p+\xi_1(\epsilon)p+\dots+\xi_{k-1}(\epsilon)p^{k-1}} \widetilde{r_{0,j}}+\displaystyle\sum_{l=1}^{e-1}\alpha_{\overline{\xi}_{l-1}(\omega)}\widetilde{r_{l,j}} +\widetilde{r_{e,j}}  \\
&=b_{ (\omega
p^k+((\xi_0(\epsilon)+jn^\prime)_p+\xi_1(\epsilon)p+\dots+\xi_{k-1}(\epsilon)p^{k-1}))n
+j}.
\end{align*}
Then the  theorem is proved.
\end{proof}

Next lemma follows from {Propositions}~\mbox{\ref{cycliF},
\ref{comPi}} and Theorem \ref{PhioNu}.
\begin{Corollary}
  The Gray image of   a linear cyclic code of length $n$ over $R$  is equivalent to a quasi-cyclic
code of index $p^{ke-1}$ and length $p^{ke} n$ over
$\mathbb{F}_{p^k}$.
\end{Corollary}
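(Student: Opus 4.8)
The plan is to move a cyclic code to a constacyclic one via $\overline{\mu}$, apply the Gray map in the setting where its interaction with the shift is already understood (Theorem~\ref{PhioNu}), and then pull everything back across the coordinate permutation supplied by Proposition~\ref{comPi}. Concretely, I would fix a linear cyclic code $C\subseteq R^n$ and put $C'=\overline{\mu}(C)$. By Proposition~\ref{cycliF}, $C'$ is a linear $(1-\gamma^e)$-constacyclic code, so $\nu(C')=C'$.

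Next I would verify that $\Phi(C')$ is quasi-cyclic of index $p^{ke-1}$ and length $p^{ke}n$. Applying $\sigma^{\otimes p^{ke-1}}$ and using Theorem~\ref{PhioNu} together with $\nu(C')=C'$ gives $\sigma^{\otimes p^{ke-1}}(\Phi(C'))=\Phi(\nu(C'))=\Phi(C')$, which is exactly the defining condition of a quasi-cyclic code of the stated index and length. Thus $\Phi(\overline{\mu}(C))$ is a genuine quasi-cyclic code over $\mathbb{F}_{p^k}$.

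Finally I would invoke Proposition~\ref{comPi}, which yields $\Phi(C')=\Phi(\overline{\mu}(C))=\pi^{\otimes p^{ke-1}}(\Phi(C))$. Since $\pi^{\otimes p^{ke-1}}$ is induced coordinatewise by the permutation $\tau$ of $\{0,1,\dots,pn-1\}$, it is a coordinate permutation of $\mathbb{F}_{p^k}^{p^{ke}n}$; applying its inverse to the quasi-cyclic code $\Phi(C')$ recovers $\Phi(C)$. Hence $\Phi(C)$ is the image of a quasi-cyclic code under a coordinate permutation, i.e.\ permutation-equivalent to a quasi-cyclic code of index $p^{ke-1}$ and length $p^{ke}n$, as claimed.

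The argument is essentially a diagram chase through the three cited results, so the only real point to nail down is the step that makes ``equivalent'' meaningful: I must confirm that $\pi^{\otimes p^{ke-1}}$ is a bijective coordinate permutation rather than a mere linear map. This reduces to checking that $\tau$ is a bijection of $\{0,1,\dots,pn-1\}$, which is where the standing hypothesis $\gcd(n,p)=1$ enters through the inverse $n'$ of $n$ modulo $p$ used in the definition of $\tau$. Once that bijectivity is in hand, the equivalence is automatic, and this is the one place I would take care rather than treat as routine.
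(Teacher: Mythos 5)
Your argument is correct and is exactly the route the paper intends: the paper states that the corollary "follows from Propositions~\ref{cycliF}, \ref{comPi} and Theorem~\ref{PhioNu}" without writing out the chase, and your write-up supplies precisely that chain ($C\mapsto\overline{\mu}(C)$, quasi-cyclicity of $\Phi(\overline{\mu}(C))$ via Theorem~\ref{PhioNu}, then transport back along the coordinate permutation $\pi^{\otimes p^{ke-1}}$). The only quibble is that $\tau$ is a bijection for any fixed integer $n'$ (for each $j$ it translates $m$ modulo $p$); the hypothesis $\gcd(n,p)=1$ is really needed earlier, to define $n'$ and $\beta$ so that Proposition~\ref{cycliF} holds.
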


Finally, we identify the Gray image of a linear
$(1+\gamma^e)$-constacyclic code.

\begin{Proposition}\label{+comPi}
   $\Phi\circ\overline{\mu}^2=\pi^{\otimes
{p^{ke-1}}}\circ\pi^{\otimes {p^{ke-1}}}\circ\Phi.$
\end{Proposition}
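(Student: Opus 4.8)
The plan is to exploit the fact that $\Phi\circ\overline{\mu}=\pi^{\otimes p^{ke-1}}\circ\Phi$ has already been established in Proposition~\ref{comPi}, and that $\overline{\mu}^2=\overline{\mu}\circ\overline{\mu}$ by the very definition given just above. The statement to be proved is purely a matter of composing the single-$\overline{\mu}$ identity with itself, so no new index bookkeeping should be required.

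First I would apply Proposition~\ref{comPi} once, written as an operator identity on $R^n$, to rewrite the left-hand side. Since $\overline{\mu}^2=\overline{\mu}\circ\overline{\mu}$, we have
\[
\Phi\circ\overline{\mu}^2=\Phi\circ\overline{\mu}\circ\overline{\mu}
=\bigl(\Phi\circ\overline{\mu}\bigr)\circ\overline{\mu}
=\bigl(\pi^{\otimes p^{ke-1}}\circ\Phi\bigr)\circ\overline{\mu}.
\]
Next I would push the factor $\Phi\circ\overline{\mu}$ that now sits on the right through a second application of Proposition~\ref{comPi}:
\[
\pi^{\otimes p^{ke-1}}\circ\Phi\circ\overline{\mu}
=\pi^{\otimes p^{ke-1}}\circ\bigl(\Phi\circ\overline{\mu}\bigr)
=\pi^{\otimes p^{ke-1}}\circ\pi^{\otimes p^{ke-1}}\circ\Phi,
\]
which is exactly the claimed identity. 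Thus the proof is essentially one line of associativity together with two invocations of the already-proved commutation relation.

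I do not expect a genuine obstacle here, since the heavy lifting — tracking how $\overline{\mu}$ permutes the Gray coordinates via $\pi^{\otimes p^{ke-1}}$ — was done in Proposition~\ref{comPi}. The only point requiring a moment of care is that $\overline{\mu}$ is a genuine module automorphism of $R^n$ (stated in the text), so that $\overline{\mu}^2$ is well-defined and composition is associative, and that Proposition~\ref{comPi} is an equality of maps valid for every $\mathbf{r}\in R^n$, hence may be applied to the vector $\overline{\mu}(\mathbf{r})$ in the second step. If one prefers an entirely self-contained argument rather than the operator composition, one could instead recompute $\Phi(\overline{\mu}^2(\mathbf{r}))$ directly, noting that $\overline{\mu}^2(\mathbf{r})=(r_0,\beta^2 r_1,\dots,\beta^{2(n-1)}r_{n-1})$ with $\beta^{2j}=1+2jn'\gamma^e$; the effect on the zeroth coordinate $\widetilde{r_{0,j}}$ is then a shift of $\xi_0(\epsilon)$ by $2jn'$ modulo $p$, which is precisely the composition of the $\tau$-induced shift with itself. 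Either route closes the argument, but the compositional one is cleaner and is the one I would present.
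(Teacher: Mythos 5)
Your proof is correct, and it takes a genuinely different route from the paper's. You derive the identity purely formally: since Proposition~\ref{comPi} is an equality of maps on all of $R^n$ and $\overline{\mu}^2=\overline{\mu}\circ\overline{\mu}$ by definition, two applications of $\Phi\circ\overline{\mu}=\pi^{\otimes p^{ke-1}}\circ\Phi$ (the second applied at the point $\overline{\mu}(\mathbf{r})$) give the result by associativity of composition. The paper instead redoes the entire coordinate computation from scratch: it expands $\pi^{\otimes p^{ke-1}}\circ\pi^{\otimes p^{ke-1}}(\Phi(\mathbf{r}))$ index by index, computes $\overline{\mu}^2(\mathbf{r})=(r_0,\beta^2 r_1,\dots,\beta^{2(n-1)}r_{n-1})$ with $\beta^{2j}=1+2jn'\gamma^e$, and matches the two sides by tracking the shift of $\xi_0(\epsilon)$ by $2jn'$ modulo $p$ --- essentially the self-contained alternative you sketch at the end. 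Your compositional argument is shorter, avoids duplicating the index bookkeeping of Proposition~\ref{comPi}, and makes transparent that the result would hold for any power $\overline{\mu}^m$ versus $(\pi^{\otimes p^{ke-1}})^m$; the paper's explicit computation buys only the concrete display of the $2jn'$ shift, at the cost of repeating (and in places typographically garbling) the earlier calculation. The one point you flag --- that Proposition~\ref{comPi} holds for every $\mathbf{r}\in R^n$ and hence may be evaluated at $\overline{\mu}(\mathbf{r})$ --- is indeed the only thing that needs checking, and it does hold.
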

\begin{proof}
First, let $ \Phi(\mathbf{r})=(b_0,b_1,\dots,b_{p^{ke}n-1}). $
Then we conclude from the proof of Proposition~\ref{comPi} that  $
 \pi^{\otimes
p^{ke-1}}(\Phi(\mathbf{r}))=(c_0,c_1,\dots,c_{p^{ke}n-1}),$ where
\begin{align*}
c_{(\omega p^k+\epsilon)n+j}  =b_{ (\omega
p^k+((\xi_0(\epsilon)+jn^\prime)_p+\xi_1(\epsilon)p+\dots+\xi_{k-1}(\epsilon)p^{k-1}))n
+j}.
   \end{align*}
 Hence $   \pi^{\otimes
{p^{ke-1}}}( \pi^{\otimes {p^{ke-1}}}
(\Phi(\mathbf{r})))=(d_0,d_1,\dots,d_{p^{ke}n-1}),$ where
\begin{align*}
d_{(\omega p^k+\epsilon)n+j} &=d_{ (\omega p^k+(\xi_0(\epsilon) +\xi_1(\epsilon)p+\dots+\xi_{k-1}(\epsilon)p^{k-1}))n+j}\\
&=d_{ (\omega p^k+(\xi_1(\epsilon)p+\dots+\xi_{k-1}(\epsilon)p^{k-1}))n +\xi_0(\epsilon)n+j}\\
 &=c_{ (\omega p^k+(\xi_1(\epsilon)p+\dots+\xi_{k-1}(\epsilon)p^{k-1}))n +(\xi_0(\epsilon)+jn^\prime)_pn+j}\\
&=c_{ (\omega
p^k+((\xi_0(\epsilon)+jn^\prime)_p+\xi_1(\epsilon)p+\dots+\xi_{k-1}(\epsilon)p^{k-1}))n
+j}\\
&=b_{ (\omega p^k+(\xi_1(\epsilon)p+\dots+\xi_{k-1}(\epsilon)p^{k-1}))n +(\xi_0(\epsilon)+2jn^\prime)_pn+j}\\
&=b_{ (\omega
p^k+((\xi_0(\epsilon)+2jn^\prime)_p+\xi_1(\epsilon)p+\dots+\xi_{k-1}(\epsilon)p^{k-1}))n
+j}.
   \end{align*}
 {On the other hand,   $ \overline{\mu}^2(\mathbf{r}
)=(r_0,\beta^2 r_1,\dots,\beta^{2(n-1)}r_{n-1})$.  Since
$\beta^{2j} =1+2jn^\prime \gamma^e$,}
\begin{align*}
\beta^{2j} r_j&= (r_{0,j}+ r_{1,j}\gamma+\dots+(2jn^\prime
r_{0,j}+r_{e,j})\gamma^e).
\end{align*}
Let $(s_0,s_1,\dots,s_{p^{ke}-1})
=\Phi(\overline{\upsilon}(\mathbf{r}))$. Then
\begin{align*}
s_{(\omega p^k+\epsilon)n+j} &=  \alpha_\epsilon
\widetilde{r_{0,j}}
+\displaystyle\sum_{l=1}^{e-1}\alpha_{\overline{\xi}_{l-1}(\omega)}
\widetilde{ r_{l,j}} + \widetilde{(2jn^\prime   r_{0,j}+r_{e,j} )}\\
 &=  (\alpha_\epsilon +2jn^\prime   )\widetilde{r_{0,j}}
+\displaystyle\sum_{l=1}^{e-1}\alpha_{\overline{\xi}_{l-1}(\omega)}
 \widetilde{r_{l,j} }+   \widetilde{ {r_{e,j}}}  \\
  &=  \left((\xi_0(\epsilon)+ \xi_1(\epsilon)\alpha+\dots+\xi_{k-1}(\epsilon)\alpha^{k-1})  +2jn^\prime   \right)\widetilde{r_{0,j}}
\\
  &~~~~+\displaystyle\sum_{l=1}^{e-1}\alpha_{\overline{\xi}_{l-1}(\omega)}
 \widetilde{r_{l,j}} +   \widetilde{r_{e,j}}  \\
&=\alpha_{(\xi_0(\epsilon)+2jn^\prime)_p+\xi_1(\epsilon)p+\dots+\xi_{k-1}(\epsilon)p^{k-1}} \widetilde{r_{0,j}}+\displaystyle\sum_{l=1}^{e-1}\alpha_{\overline{\xi}_{l-1}(\omega)}\widetilde{r_{l,j}} +\widetilde{r_{e,j}}  \\
&=b_{ (\omega
p^k+((\xi_0(\epsilon)+2jn^\prime)_p+\xi_1(\epsilon)p+\dots+\xi_{k-1}(\epsilon)p^{k-1}))n
+j}.
\end{align*}
The desired result is obtained.
\end{proof}

A delineation  of  the Gray image of a linear
$(1+\gamma^e)$-constacyclic code is given as a consequence of
Propositions~\ref{+cycliF}, \ref{+comPi} and Theorem \ref{PhioNu}.
\begin{Corollary}
  The Gray image of   a linear $(1+\gamma^e)$-constacyclic code of length $n$ over $R$  is equivalent to a quasicyclic
code of index $p^{ke-1}$ and length $p^{ke} n$
over~$\mathbb{F}_{p^k}$.
\end{Corollary}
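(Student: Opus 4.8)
The plan is to obtain this corollary by the same reduction strategy already used for linear cyclic codes, now iterated once more. Let $C\subseteq R^n$ be a linear $(1+\gamma^e)$-constacyclic code. The first step is to transport $C$ to a code on which Theorem~\ref{PhioNu} applies directly: by Proposition~\ref{+cycliF} the module automorphism $\overline{\mu}^2$ carries $C$ to a linear $(1-\gamma^e)$-constacyclic code $D:=\overline{\mu}^2(C)$, so that $\nu(D)=D$. I read Proposition~\ref{+cycliF} with its natural target here, namely that $\overline{\mu}^2(C)$ is $(1-\gamma^e)$-constacyclic; this is forced by the fact, noted above, that $\mu^2$ sends $R[X]/\langle X^n-(1+\gamma^e)\rangle$ isomorphically onto $R[X]/\langle X^n-(1-\gamma^e)\rangle$, so that $\overline{\mu}^2$ maps $(1+\gamma^e)$-constacyclic codes to $(1-\gamma^e)$-constacyclic ones.

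The second step would apply Theorem~\ref{PhioNu} to $D$. Since $\nu(D)=D$, it gives $\sigma^{\otimes p^{ke-1}}(\Phi(D))=\Phi(\nu(D))=\Phi(D)$, so $\Phi(D)$ is a quasi-cyclic code of index $p^{ke-1}$ and length $p^{ke}n$ over $\mathbb{F}_{p^k}$. This produces a concrete quasi-cyclic code with exactly the required parameters, against which $\Phi(C)$ can then be compared.

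The third step would relate $\Phi(D)$ back to $\Phi(C)$ using Proposition~\ref{+comPi}. Writing $P:=\pi^{\otimes p^{ke-1}}\circ\pi^{\otimes p^{ke-1}}$, that proposition gives $\Phi(D)=\Phi(\overline{\mu}^2(C))=P(\Phi(C))$. Because $\pi$ is the coordinate permutation induced by $\tau$ and $\pi^{\otimes p^{ke-1}}$ is the blockwise concatenation of such permutations, $\pi^{\otimes p^{ke-1}}$ is a genuine permutation of the $p^{ke}n$ coordinate positions; hence so is its square $P$, and $P$ is invertible. Therefore $\Phi(C)=P^{-1}(\Phi(D))$ is obtained from the quasi-cyclic code $\Phi(D)$ by a fixed coordinate permutation, which shows that $\Phi(C)$ is equivalent to a quasi-cyclic code of index $p^{ke-1}$ and length $p^{ke}n$ over $\mathbb{F}_{p^k}$.

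There is essentially no computational obstacle left at this stage: the two genuinely substantive computations have already been discharged in Proposition~\ref{+comPi} (the commutation relation $\Phi\circ\overline{\mu}^2=P\circ\Phi$) and in Theorem~\ref{PhioNu}. The delicate point is conceptual rather than computational, and it is the one I would flag as the main thing to get right. One must not expect $\Phi(C)$ itself to be quasi-cyclic, but only equivalent to one, so it is necessary to verify that $P$ is an honest coordinate permutation, so that the equivalence is genuine permutation-equivalence and every code parameter --- length $p^{ke}n$, index $p^{ke-1}$, and $\mathbb{F}_{p^k}$-linearity --- is preserved. The other place to pause is the interpretation of Proposition~\ref{+cycliF}: only if $\overline{\mu}^2(C)$ is $(1-\gamma^e)$-constacyclic does Theorem~\ref{PhioNu} become available in the second step.
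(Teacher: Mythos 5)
Your proposal is correct and follows exactly the route the paper intends: the paper derives this corollary as an immediate consequence of Propositions~\ref{+cycliF} and~\ref{+comPi} together with Theorem~\ref{PhioNu}, which is precisely the three-step chain you spell out (transport $C$ to the $(1-\gamma^e)$-constacyclic code $\overline{\mu}^2(C)$, apply the Gray map there, then undo the coordinate permutation $\pi^{\otimes p^{ke-1}}\circ\pi^{\otimes p^{ke-1}}$). You were also right to read Proposition~\ref{+cycliF} with target $(1-\gamma^e)$-constacyclic; the paper's statement of that proposition contains a typo, and your corrected reading is the one the argument requires.
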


%
%
%
%
%
%
%
%
%
%


\end{document}